\documentclass[12pt]{article}
\usepackage{amsmath, amssymb, amsthm, url}

\newtheorem{theorem}{Theorem}[section]
\newtheorem{lem}[theorem]{Lemma}

\def\3{\subset }
\def\4{\subseteq }
\def\ov{\overline}
\def\alege#1#2{\left(\begin{array}{c}{#1}\\ {#2}\end{array}\right)}

\def\0{\leqno}
\def\a{{\alpha}}
\def\barr{\begin{array}}
\def\earr{\end{array}}
\def\dd{\displaystyle}

\def\bld#1#2{{\buildrel{#1}\over{#2}}}
\def\st#1#2{{\mathrel{\mathop{#2}\limits_{#1}}{}\!}}
\def\stb#1#2#3{{\st{{#1}}{\bld{{#2}}{#3}}{}\!}}
\def\xmare#1#2{\stb{#1}{#2}{\mbox{\Huge$\times$}}}


\title{\bf On the total number of principal series of a finite abelian group}
\author{Lucian Bentea \and Marius T\u arn\u auceanu}
\date{October 1, 2010}

\begin{document}

\maketitle

\begin{abstract}
In this note we give a bijective proof for the explicit formula
giving the total number of principal series of the direct product
$\mathbb{Z}_{p^{\alpha_1}} \times \mathbb{Z}_{p^{\alpha_2}}$,
where $p$ is a prime number. This new proof is easier to
generalize to arbitrary finite abelian groups than the original
direct calculation method.
\end{abstract}

\noindent{\bf MSC (2000):} Primary 05E15; Secondary 20D15, 20D30,
20D40, 20D60.

\noindent{\bf Key words:} finite abelian groups, principal series,
recurrence relations, ballot numbers, weighted lattice paths

\section{Introduction}

The starting point for our discussion is given by the paper
\cite{Ste}, where the number of maximal chains of subgroups in a
finite nilpotent group has been studied. We have proved that this
combinatorial problem on subgroup lattices can be reduced to
finite $p$-groups. The above number is explicitly computed in
\cite{Ste} for finite elementary abelian $p$-groups, finite abelian
$p$-groups of type $\mathbb{Z}_{p^{\a_1}}\times\mathbb{Z}_{p^{\a_2}}$ and finite
$p$-groups possessing a maximal subgroup which is cyclic.
The main goal of the present note is to give
an alternative proof of Proposition 3.2.1 of \cite{Ste},
which can be more easily generalized to the case
of \emph{arbitrary} finite abelian groups
than the original direct calculation method.
The technique that will be used is related to weighted words
encoding weighted lattice paths in the plane.

First of all, we recall some basic notions and results of group
theory, according to \cite{Suz}. Let $G$ be a finite group. A chain
of subgroups in $G$ of type
$$\{1\}=H_0\subset H_1\subset H_2\subset\cdots\subset H_q=G \0(*)$$ is
called a {\it maximal chain} if it is not properly included in
another chain. Moreover, if $G$ is nilpotent, then $H_{i-1}$ is a
normal subgroup of $H_i$ for any $i=\ov{1,q}$, therefore the
maximal chain $(*)$ is in fact a {\it composition series} of $G$
(see Definition 5.8 (Chapter 1) of \cite{Suz}, I). We also infer
that all maximal chains of subgroups of a nilpotent group are of
the same length. In the particular case when $G$ is abelian, all
subgroups $H_i$ of the maximal chain $(*)$ are normal in $G$ and
so $(*)$ is a {\it principal series} of $G$ (see Definition 1.11
(Chapter 2) of \cite{Suz}, I).

In the following, let $G$ be a finite nilpotent group of order
$p_1^{n_1}p_2^{n_2}...p_m^{n_m}$ $(p_1,p_2,...,p_m$ distinct
primes) and $L(G)$ be the subgroup lattice of $G$. We shall denote
by $n(G)$ the number of all maximal chains of subgroups in $G$ and
by $\alege{n_1+n_2+...+n_m}{n_1,n_2,...,n_m}$ the multinomial
coefficient $\dd\frac{(n_1+n_2+...+n_m)!}{n_1!n_2!...n_m!}.$ Under
our hypothesis, it is well-known that $G$ can be written as the
direct product of its Sylow subgroups
$$G=\xmare{i=1}{m}G_i,$$
where $|G_i|=p_i^{n_i}$, for all $i=\overline{1,m}.$  Since the
subgroups of a direct product of groups having coprime orders are
also direct products (see Corollary of (4.19), \cite{Suz}, I), one
obtains that
$$L(G)=\xmare{i=1}{m}L(G_i).$$
The above lattice direct decomposition  is frequently used to
reduce many problems on $L(G)$ to the subgroup lattices of finite
$p$-groups (for example, see \cite{Sch} and \cite{Tar}). By using this
technique, in \cite{Ste} we have shown the following theorem.

\bigskip\noindent{\bf Theorem A.} {\it The numbers $n(G)$ and $n(G_i)$, $i=1,2,...,m$, are connected by
the equality
$$n(G)=\alege{n_1+n_2+...+n_m}{n_1,n_2,...,n_m} \prod_{i=1}^m n(G_i).$$}

So, in order to compute the number of maximal chains of subgroups
of a finite nilpotent group (and, in particular, of a finite
abelian group), we must focus only on finite $p$-groups. The
structure of these groups is exhaustively determined in the
abelian case. More precisely, by the fundamental theorem of
finitely generated abelian groups, a finite abelian $p$-group has
a direct decomposition of type $\xmare{i=1}{k}\mathbb{Z}_{p^{\a_i}},$
where $p$ is a prime and $1\le\a_1\le\a_2\le...\le\a_k.$ The
subgroups of such a group have been studied in \cite{Bir}. Clearly,
the number of its maximal chains of subgroups (that is, of its
principal series) is an integer valued function that depends on
$p$ and on $\a_1, \a_2,...,\a_k,$ say $f_p(\a_1,\a_2,...\a_k).$
Remark that, for a fixed $p$, this function is symmetric in $\a_1,
\a_2,...,\a_k.$ In the following our aim is to determine it in an
explicit manner.\bigskip

Obviously, if $k=1$ we have $f_p(\a_1)=1$. For $k=2$, the next
auxiliary result has been used in \cite{Ste} to compute
$f_p(\a_1,\a_2).$

\bigskip\noindent{\bf Lemma B.} {\it The number $f_p(\a_1,\a_2)$ of all principal series of the direct
product $\mathbb{Z}_{p^{\a_1}}\hspace{-1mm}\times\mathbb{Z}_{p^{\a_2}}$ satisfies
the following recurrence relation:
$$f_p(\a_1,\a_2)=pf_p(\a_1-1,\a_2)+f_p(\a_1,\a_2-1),\mbox{ for all
}1\le\a_1\le\a_2.$$}

In view of Lemma B, the quantity $f_p(\a_1,\a_2)$ was obtained (by
a direct calculation) in Proposition 3.2.1 of \cite{Ste}.

\bigskip\noindent{\bf Theorem C.} {\it The following equality holds:
$$f_p(\a_1,\a_2)=1+\dd\sum_{i=1}^{\a_1}\left[\alege{\a_1{+}\a_2}{i}{-}\alege{\a_1{+}\a_2}{i{-}1}\right]p^i, \mbox{ for all
}1\le\a_1\le\a_2.$$}

\section{A bijective proof of Theorem C}

Our new proof of Theorem C uses a bijective argument, that can be
easily extended to arbitrary finite abelian groups. The explicit
formula that we find is also expressed using ballot numbers (see
\cite{And}). We use combinatorics on words encoding weighted
lattice paths in the plane with standard North and East steps. Our
proof is similar to the one given for Proposition 2 in \cite{Sul}
related to a simple card guessing game, also investigated in
\cite{Pro} (example 5) using generating functions and the kernel
method. The difference is that we use no geometrical arguments,
which we believe makes the proof easier to generalize to higher
dimensions. The kernel method also seems difficult to generalize,
starting from the three dimensional case $k = 3$.

Let $\Sigma := \{N, E\}$ denote the alphabet used to encode North and East
steps in the lattice $\mathbb{N} \times \mathbb{N}$. A path in this lattice,
starting from the origin, is therefore encoded as a word $\alpha \in \Sigma^*$,
where $\Sigma^*$ denotes the Kleene closure of the alphabet $\Sigma$.
Given a fixed and arbitrary word $\alpha \in \Sigma^*$, denote by
$\alpha[k, l] := \alpha_k \, \alpha_{k + 1} \, \ldots \, \alpha_l \in \Sigma^*$
the part of the word $\alpha$ starting at position $k$ and ending at position $l$,
where $1 \leq k \leq l \leq |\alpha|$. We use the notations $|\alpha|$ for
the length of the word $\alpha$ and $\alpha_i$ for the $i$-th symbol of the word $\alpha$.
Also denote by $\alpha[k] := \alpha[1, k]$ the prefix of $\alpha$ consisting of
its first $k$ symbols, with $1 \leq k \leq |\alpha|$.
By the \emph{reflection} of a word $\alpha \in \Sigma^*$ we understand the
word denoted by $\alpha' \in \Sigma^*$ obtained from $\alpha$ by interchanging
its $N$ symbols with $E$ symbols and vice-versa.
Consider the functions $\# N, \# E : \Sigma^* \to \mathbb{N}$ defined through
\[
\# N(\alpha) := \# \{\, i \;;\; 1 \leq i \leq |\alpha|,\ \alpha_i = N \,\}
\]
and
\[
\# E(\alpha) := \# \{\, i \;;\; 1 \leq i \leq |\alpha|,\ \alpha_i = E \,\},
\]
for all words $\alpha \in \Sigma^*$, where $\# A$ denotes the number of elements of the set $A$.
It follows that the pair $(\# E(\alpha), \# N(\alpha)) \in \mathbb{N} \times \mathbb{N}$
specifies the coordinates of the point where the lattice path encoded by $\alpha$ ends.
We say that the $i$-th symbol of $\alpha$ is a \emph{diagonal symbol} provided that
$\# N(\alpha[i]) = \# E(\alpha[i])$, which is the same with saying that the lattice path
encoded by the prefix $\alpha[i]$ ends on the diagonal $y = x$.
Given a diagonal symbol $\alpha_i$, we call $\alpha_{i + 1}$ a \emph{jump symbol}
if $\alpha_{i + 1} = \alpha_i$, which means that the symbol $\alpha_{i + 1}$ causes the lattice path
encoded by the prefix $\alpha[i + 1]$ to go from one side of the diagonal $y = x$ to the other.
By convention, we always take the first symbol of a word $\alpha \in \Sigma^*$ to be a jump step.
We say that a word $\beta \in \Sigma^*$ is
\emph{subdiagonal} if it satisfies $\# E(\beta[i]) \geq \# N(\beta[i])$
for all $i \in \{1, 2, \ldots, |\beta|\}$.
This means that the lattice path encoded by $\beta$ always stays below the diagonal $y = x$,
possibly touching it. If the last symbol of $\beta$ is diagonal, then $\beta$
is called a \emph{lower Catalan word}. Similarly, we say that $\gamma \in \Sigma^*$
is \emph{superdiagonal} if
$\# N(\gamma[i]) \geq \# E(\gamma[i])$ for all $i \in \{1, 2, \ldots, |\gamma|\}$.
This is the same with saying that the lattice path encoded by $\gamma$ always stays
above the diagonal $y = x$, possibly touching it. If the last symbol of $\gamma$ is diagonal,
then $\gamma$ is called an \emph{upper Catalan word}.
Consider the function $J : \Sigma^* \to 2^{\mathbb{N}}$ that maps each word $\alpha \in \Sigma^*$
to the set of indices $J(\alpha)$ of all its jump symbols and is defined through:
\[
J(\alpha) := \{\, i \;;\; 1 \leq i \leq |\alpha|, \ \mbox{$\alpha_i$ is a jump symbol} \,\}.
\]
Then the following word decomposition result holds.

\begin{lem}
Let $\alpha \in \Sigma^*$ be a fixed and arbitrary word and denote by
$\{j_1, j_2, \ldots, j_r\} := J(\alpha)$ the set of indices of its jump symbols, where $r \geq 1$.
Consider the words $\alpha^1$, $\alpha^2$, \ldots, $\alpha^r$ defined through
$
\alpha^k := \alpha[j_k, j_{k + 1} - 1]$
for $1 \leq k < r$, and $\alpha^r := \alpha[j_r, |\alpha|]$.
Then $\alpha$ can be written as the concatenation
\begin{equation}
\alpha = \alpha^1 \, \alpha^2 \, \ldots \, \alpha^r,
\end{equation}
where the words $\alpha^1$, $\alpha^2$, \ldots, $\alpha^{r - 1}$ form an alternating sequence
of lower and upper Catalan words, by definition of the jump function $J$,
and $\alpha^r$ is either subdiagonal or superdiagonal. \qed
\end{lem}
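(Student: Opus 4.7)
The identity $\alpha = \alpha^1\,\alpha^2\,\ldots\,\alpha^r$ is immediate from the definition of the pieces together with the convention $j_1 = 1$, which guarantees that $\alpha^1$ starts at the first letter of $\alpha$. The real content of the lemma is the claim that the intermediate pieces $\alpha^1,\ldots,\alpha^{r-1}$ are Catalan words of alternating type and that $\alpha^r$ is one-sided.

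First, I would verify that for $1 \leq k < r$ the last symbol of $\alpha^k$ is a diagonal symbol of $\alpha$. This is built into the definition of the jump function: since $j_{k+1} \in J(\alpha)$ with $j_{k+1} > 1$, the symbol $\alpha_{j_{k+1}-1}$ is forced to be diagonal, i.e.\ $\# N(\alpha[j_{k+1}-1]) = \# E(\alpha[j_{k+1}-1])$. Similarly, for $k \geq 2$ the index $j_k - 1$ is diagonal, so the prefix $\alpha[j_k - 1]$ has $\# N = \# E$; this lets me translate between partial counts inside $\alpha^k$ and partial counts of prefixes of the full word $\alpha$ by subtracting the common value $\# N(\alpha[j_k-1]) = \# E(\alpha[j_k-1])$. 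The case $k = 1$ is handled identically with the empty prefix having zero counts.

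The heart of the argument is to show that the path encoded by each $\alpha^k$ ($k < r$) remains weakly on one side of the diagonal. Suppose $\alpha_{j_k} = N$, so the path is at height $+1$ immediately after $\alpha_{j_k}$. If at some intermediate index $m$ with $j_k < m < j_{k+1}$ the path were to touch the diagonal and then cross it, we would have $\alpha_m$ diagonal and $\alpha_{m+1} = \alpha_m$, which makes $m+1$ a jump index strictly between $j_k$ and $j_{k+1}$, contradicting consecutiveness in $J(\alpha)$. Hence the path can only bounce off the diagonal, so $\alpha^k$ is superdiagonal and, by the first step, ends on the diagonal; therefore $\alpha^k$ is an upper Catalan word. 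The case $\alpha_{j_k} = E$ is symmetric and yields a lower Catalan word.

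Alternation then follows because an upper Catalan word $\alpha^k$ must terminate in $E$ (the final descent from height $+1$), so the next jump symbol $\alpha_{j_{k+1}} = \alpha_{j_{k+1}-1} = E$ takes the path to height $-1$ and $\alpha^{k+1}$ starts out below the diagonal; the other case is symmetric. Finally, for $\alpha^r$ exactly the same "a crossing would create a new jump" reasoning shows it stays on a single side of the diagonal, but since there is no subsequent jump to force a return, $\alpha^r$ is merely subdiagonal or superdiagonal. The main obstacle I anticipate is purely bookkeeping: keeping the partial counts inside $\alpha^k$ synchronized with those of $\alpha$ at the boundaries $k = 1$ and $k = r$, which is why I would dispose of that translation at the start of the argument.
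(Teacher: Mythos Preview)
Your argument is correct and is exactly the natural verification the lemma invites. In the paper this lemma is stated with an immediate \qed\ and no proof; the authors regard the decomposition as following ``by definition of the jump function $J$'', so there is no alternative approach to compare against---you have simply supplied the details the paper omits. One small remark: in your alternation step you may want to note explicitly that $|\alpha^k|\geq 2$ for $k<r$ (a single $N$ would end at height $+1$, not on the diagonal), which is what justifies looking at position $j_{k+1}-2$.
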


A \emph{symbol weight function} is a function
$\omega : \Sigma^* \times \mathbb{N} \setminus \{0\} \to \mathbb{R}$
that assigns a real value to each symbol of a word $\alpha \in \Sigma^*$,
also called its \emph{weight}. Thus, if $(\alpha, j) \in \Sigma^* \times \mathbb{N}$
is a fixed and arbitrary pair, then $\omega$ maps this pair to the weight
$\omega(\alpha, j) \in \mathbb{R}$ of the $j$-th symbol in the word $\alpha$.
By convention, we put $\omega(\alpha, k) = 0$ provided that $k > |\alpha|$,
for all words $\alpha \in \Sigma^*$. Given a symbol weight function $\omega$,
we define the \emph{weight of a word} to be the product of the weights of all its symbols.
In other words, the weight $\omega(\alpha) \in \mathbb{R}$ of a fixed and arbitrary word
$\alpha \in \Sigma^*$ is defined through
$\omega(\alpha) := \prod_{i = 1}^{|\alpha|} \omega(\alpha, i)$.
We also define the weight of a finite set of words $\mathcal{A} \subset \Sigma^*$ to
be the sum of the weights of all its elements,
$\omega(\mathcal{A}) := \sum_{\alpha \in \mathcal{A}} \omega(\alpha)$.
These definitions and Lemma 2.1 allow us to calculate the weight
of a word $\alpha \in \Sigma^*$ by calculating the weight of each word
in its decomposition (1), independently.
To be more precise, we have the following result.

\begin{lem}
Let $\alpha \in \Sigma^*$ be a fixed, arbitrary word and
denote by $C \subseteq \{1, 2, \ldots, r - 1\}$ the set of indices
of the upper Catalan words in the decomposition (9) of $\alpha$.
Then the weight of $\alpha$ can be calculated through:
\begin{equation}
\omega(\alpha) =
\left[\prod_{i \in C} \omega(\alpha_i)\right]
\left[\prod_{j \in \{1, 2, \ldots, r - 1\} \setminus C} \omega(\alpha_j)\right]
\omega\left(\alpha^r\right). \qed
\end{equation}
\end{lem}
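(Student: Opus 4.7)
The plan is to unfold definitions and reduce the statement to the multiplicativity of the symbol weight function under the concatenation supplied by Lemma 2.1.

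First I would invoke Lemma 2.1 to write $\alpha = \alpha^1 \alpha^2 \cdots \alpha^r$, where the pieces $\alpha^1, \ldots, \alpha^{r-1}$ alternate between lower and upper Catalan words and $\alpha^r$ is either subdiagonal or superdiagonal. By the definition of the set $C$, the indices $i \in C$ correspond exactly to the upper Catalan pieces and $\{1,\ldots,r-1\} \setminus C$ to the lower ones.

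Next, from the definition $\omega(\alpha) = \prod_{i=1}^{|\alpha|} \omega(\alpha, i)$, I would partition the index range $\{1,\ldots,|\alpha|\}$ into the $r$ blocks $[j_k, j_{k+1} - 1]$ (with $j_{r+1} := |\alpha| + 1$) induced by the jump indices. Grouping the factors of the product block by block gives
\begin{equation*}
\omega(\alpha) \;=\; \prod_{k=1}^{r} \prod_{i = j_k}^{j_{k+1}-1} \omega(\alpha, i) \;=\; \prod_{k=1}^{r} \omega(\alpha^k),
\end{equation*}
provided the symbol weight function respects the decomposition, i.e.\ the weight attached to the $i$-th symbol of $\alpha$ coincides with the weight of the corresponding symbol viewed inside the standalone piece $\alpha^k$. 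Splitting the first $r-1$ factors according to whether $k \in C$ (upper Catalan) or $k \in \{1,\ldots,r-1\}\setminus C$ (lower Catalan) and leaving $\omega(\alpha^r)$ as the last factor then yields the claimed identity (reading the printed $\omega(\alpha_i)$ as $\omega(\alpha^i)$, as the context of the preceding paragraph dictates).

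The only real obstacle is the compatibility assumption used in the displayed equation above: one must check that the symbol weight function $\omega$ depends only on local structural features of each symbol (its letter, its position relative to the diagonal, whether it is a jump symbol, etc.) that are invariant under restricting attention to a sub-block $\alpha^k$. This is an immediate consequence of the definitions introduced before the lemma, since each Catalan piece $\alpha^k$ begins at a diagonal position of $\alpha$, so the relations $\# E(\alpha[i]) \geq \# N(\alpha[i])$ and $\# N(\alpha[i]) \geq \# E(\alpha[i])$ inherited inside the block match those computed from the block itself. Once this compatibility is noted, the proof is a routine rearrangement of the defining product. \qed
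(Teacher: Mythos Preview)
Your proposal is correct and follows exactly the route the paper has in mind: the paper supplies no proof at all (the \qed\ is placed inside the statement), treating the identity as an immediate regrouping of the defining product $\omega(\alpha)=\prod_{i=1}^{|\alpha|}\omega(\alpha,i)$ along the blocks provided by Lemma~2.1. Your write-up is simply a careful unpacking of that, and you even go beyond the paper by flagging the compatibility issue---that the symbol weight computed inside a standalone piece $\alpha^k$ must agree with the symbol weight computed inside $\alpha$---and resolving it via the fact that each block starts at a diagonal position; the paper leaves this entirely implicit.
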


Let $\lambda \in \mathbb{R} \setminus \{0\}$ be a fixed and arbitrary
non-zero real number and consider the symbol weight function
$\omega : \Sigma^* \times \mathbb{N} \setminus \{0\} \to \mathbb{R}$
defined through:
\begin{equation}
\omega(\alpha, j) :=
\begin{cases}
\lambda, & \alpha_j = E, \ \ \# N(\alpha[j]) > \# E(\alpha[j]) \\
& \qquad\qquad\qquad \mbox{or} \\
& \alpha_j = N, \ \ \# N(\alpha[j]) \leq \# E(\alpha[j]), \\
\\
1, & \mbox{otherwise}.
\end{cases}
\end{equation}
Denote by $\mathcal{P}(x, y) := \{\, \alpha \in \Sigma^* \;;\; \# E(\alpha) = x, \ \# N(\alpha) = y \,\}$
the set of words in $\Sigma^*$ encoding all plane lattice paths that start from the origin
and end at $(x, y) \in \mathbb{N} \times \mathbb{N}$.
Consider the double indexed sequence $\left(f(x, y)\right)_{x, y \geq 0}$
whose elements are defined through $f(x, y) := \omega\left(\mathcal{P}(x, y)\right)$,
for each $x, y \geq 0$, where $\omega$ is the weight function
determined by the symbol weight function in (3).
Then the elements $f(x, y) \in \mathbb{R}$ satisfy the recurrence relations
\begin{equation}
f(x, y) = \lambda f(x - 1, y) + f(x, y - 1),
\quad \forall y > x
\end{equation}
and
\begin{equation}
f(x, y) = f(x - 1, y) + \lambda f(x, y - 1),
\quad \forall x \geq y,
\end{equation}
where $x, y \geq 1$. They also obey the initial conditions:
\begin{equation}
f(0, y) = f(x, 0) = 1,
\quad \forall x, y \geq 0.
\end{equation}
The problem is to solve the partial difference equation given by
(4), (5) and (6). In other words, we ask for an explicit formula for
$f(x, y)$, given fixed and arbitrary indices $x, y \geq 0$.
Setting $\lambda = p$, we notice that the function $f(x, y)$
satisfies the same recurrence relation
as the function $f_p(\alpha_1, \alpha_2)$ in Lemma B,
counting the number of all principal series of the direct product
$\mathbb{Z}_{p^{\alpha_1}} \times \mathbb{Z}_{p^{\alpha_2}}$.
Thus, finding an explicit formula for $f(x, y)$ is the same
with proving Theorem C, which we do in what follows.
Denote by $\mathcal{P}_n(x, y) := \{\, \alpha \in \mathcal{P}(x, y) \;;\; \omega(\alpha) = \lambda^n \,\}$
the set of all words of weight $\lambda^n$, whose associated lattice paths start from the origin
and end at $(x, y) \in \mathbb{N} \times \mathbb{N}$.
Clearly, the set $\left\{\, \mathcal{P}_n(x, y) \;;\; n \geq 0 \, \right\}$
forms a partition of $\mathcal{P}(x, y)$, which allows us to write
$f(x, y) = \omega\left(\mathcal{P}(x, y)\right) = \sum_{n \geq 0} \omega\left(\mathcal{P}_n(x, y)\right)$.
The weight of $\mathcal{P}_n(x, y)$ is also given by
$\omega\left(\mathcal{P}_n(x, y)\right) = \sum_{\alpha \in \mathcal{P}_n(x, y)} \omega(\alpha)$,
for fixed and arbitrary $n \geq 0$.
Since the weight of all words in $\mathcal{P}_n(x, y)$ is equal to $\lambda^n$, we have
$\omega\left(\mathcal{P}_n(x, y)\right) = \# \mathcal{P}_n(x, y) \ \lambda^n$,
which gives a more explicit formula for $f(x, y)$:
\begin{equation}
f(x, y) = \sum_{n \geq 0} \# \mathcal{P}_n(x, y) \ \lambda^n.
\end{equation}
We now aim at finding an explicit formula for the number of elements of $\mathcal{P}_n(x, y)$.
To achieve this, we first need to obtain a general formula
for the weight of an arbitrary word in $\mathcal{P}(x, y)$.
Let $x, y \geq 0$ be such that $x < y$ and let $\alpha \in \mathcal{P}(x, y)$
be a fixed and arbitrary word. Consider the decomposition (1)
of $\alpha$ and let $k \in \{1, 2, \ldots, r - 1\}$ be fixed.
By definition (3) of our particular symbol weight function $\omega$,
if $\alpha^k$ is a lower Catalan word, then its weight is given by
$\omega\left(\alpha^k\right) = \lambda^{\# N\left(\alpha^k\right)} = \lambda^{\# E\left(\alpha^k\right)}$.
On the other hand, if $\alpha^k$ is an upper Catalan word, then its weight can be calculated using
$\omega\left(\alpha^k\right) = \lambda^{\# E\left(\alpha^k\right) - 1}$.
Since $x < y$, the word $\alpha^r$ is superdiagonal and its weight is given by
$\omega\left(\alpha^r\right) = \lambda^{\# E\left(\alpha^r\right)}$.
Denote by $C \subseteq \{1, 2, \ldots, r - 1\}$ the set of indices
of the upper Catalan words in the decomposition (1) of $\alpha$.
On account of Lemma 2.2 and our previous calculations, the following relation is obtained
\begin{equation}
\log_{\lambda} \omega(\alpha) =
-\# C + \# E\left(\alpha^r\right) +
\sum_{i \in C} \# E\left(\alpha^i\right) +
\sum_{j \in \{1, 2, \ldots, r - 1\} \setminus C} \# E\left(\alpha^j\right),
\end{equation}
which can also be written as $\log_{\lambda} \omega(\alpha) = \# E(\alpha) - \# C$.
From this relation, the following result is obtained.

\begin{lem}
Let $n \geq 0$ and $x, y \geq 0$ be fixed, with $x < y$.
Then the weight of a word $\alpha \in \mathcal{P}(x, y)$ is $\lambda^n$ if and only if
the number of upper Catalan words in its decomposition is equal to $x - n$. \qed
\end{lem}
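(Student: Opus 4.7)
The plan is to observe that Lemma 2.3 is essentially a direct reading of the identity
$\log_{\lambda} \omega(\alpha) = \# E(\alpha) - \# C$
stated immediately after equation (8), so the main task is to justify how (8) collapses to that identity and then to specialize to $\alpha \in \mathcal{P}(x, y)$.

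First I would take an arbitrary $\alpha \in \mathcal{P}(x, y)$ with $x < y$, fix its decomposition (1) from Lemma 2.1, and recall the three weight formulas for the components: $\omega(\alpha^k) = \lambda^{\# E(\alpha^k)}$ when $\alpha^k$ is a lower Catalan word, $\omega(\alpha^k) = \lambda^{\# E(\alpha^k) - 1}$ when $\alpha^k$ is an upper Catalan word, and $\omega(\alpha^r) = \lambda^{\# E(\alpha^r)}$ since $x < y$ forces $\alpha^r$ to be superdiagonal. Feeding these into Lemma 2.2 and taking $\log_{\lambda}$ yields (8) directly, with the $-\# C$ accounting for the shift by $1$ contributed by each upper Catalan word.

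Next I would simplify (8) by noting that the concatenation $\alpha = \alpha^1 \alpha^2 \cdots \alpha^r$ partitions the symbols of $\alpha$, so that
\[
\# E(\alpha^r) + \sum_{i \in C} \# E(\alpha^i) + \sum_{j \in \{1, \ldots, r - 1\} \setminus C} \# E(\alpha^j) = \# E(\alpha).
\]
Substituting this into (8) gives $\log_{\lambda} \omega(\alpha) = \# E(\alpha) - \# C$. Since $\alpha \in \mathcal{P}(x, y)$ forces $\# E(\alpha) = x$, we obtain $\log_{\lambda} \omega(\alpha) = x - \# C$.

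Finally, the equivalence in the lemma is a tautology once the last identity is in hand: $\omega(\alpha) = \lambda^n$ if and only if $x - \# C = n$, i.e.\ $\# C = x - n$. The only conceptual obstacle is really the one already handled by the author before the statement, namely checking that the local weight formulas for lower, upper Catalan and superdiagonal factors produce the clean telescoping $\# E(\alpha) - \# C$; once this is in place, no further computation is needed, and the lemma records precisely the reformulation $\# C = x - n$ that will be used to count $\mathcal{P}_n(x, y)$ in the next step of the argument.
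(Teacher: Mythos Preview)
Your proposal is correct and follows exactly the paper's own approach: the lemma is stated with a \qed and no separate proof because the identity $\log_{\lambda}\omega(\alpha)=\#E(\alpha)-\#C$ is derived in the paragraph immediately preceding it, and you have simply spelled out that derivation and the final specialization $\#E(\alpha)=x$. There is no difference in method to comment on.
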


Following a similar argument, it can also be shown that the next result holds
for the case when $x \geq y$.

\begin{lem}
Let $n \geq 0$ and $x, y \geq 0$ be fixed, with $x \geq y$.
Then the weight of a word $\alpha \in \mathcal{P}(x, y)$ is $\lambda^n$ if and only if
the number of upper Catalan words in its decomposition is equal to $y - n$. \qed
\end{lem}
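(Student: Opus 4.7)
The plan is to follow the argument used in the proof of Lemma 2.3 essentially verbatim, interchanging the roles of subdiagonal and superdiagonal subwords throughout. Fix an arbitrary $\alpha \in \mathcal{P}(x, y)$ with $x \geq y$ and apply Lemma 2.1 to produce the decomposition $\alpha = \alpha^1 \alpha^2 \cdots \alpha^r$. The hypothesis $x \geq y$ now forces the final segment $\alpha^r$ to be subdiagonal rather than superdiagonal, which is the only structural change from the setting of Lemma 2.3.

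I would next compute the weight of each component individually using (3). A direct case analysis on the weight function shows that within a subdiagonal context every $N$ symbol contributes a factor of $\lambda$ while every $E$ symbol contributes $1$. Consequently any lower Catalan subword $\beta$ satisfies $\omega(\beta) = \lambda^{\#N(\beta)}$, and the subdiagonal last piece obeys $\omega(\alpha^r) = \lambda^{\#N(\alpha^r)}$. For any upper Catalan subword $\gamma$, the computation is exactly the one already carried out in the proof of Lemma 2.3, yielding $\omega(\gamma) = \lambda^{\#E(\gamma) - 1} = \lambda^{\#N(\gamma) - 1}$, where the second equality uses $\#N(\gamma) = \#E(\gamma)$ valid for every Catalan word.

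Assembling these contributions via Lemma 2.2 and denoting by $C$ the index set of the upper Catalan subwords of the decomposition, the $-1$ corrections accumulate to exactly $-\#C$, so
\[
\log_\lambda \omega(\alpha) \;=\; \sum_{k = 1}^{r} \#N(\alpha^k) \;-\; \#C \;=\; \#N(\alpha) \;-\; \#C \;=\; y \;-\; \#C.
\]
The equivalence $\omega(\alpha) = \lambda^n \Longleftrightarrow \#C = y - n$ then follows at once.

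The only real obstacle is conceptual rather than computational: the asymmetry in (3) between the strict inequality $\#N > \#E$ required for an $E$ symbol to be \emph{heavy} and the weak inequality $\#N \leq \#E$ required for an $N$ symbol to be heavy. This asymmetry is precisely what forces the exponent in Lemma 2.3 to be controlled by $\#E(\alpha) = x$ while here it must be controlled by $\#N(\alpha) = y$; once this dichotomy is properly tracked the two proofs are formal mirror images of one another.
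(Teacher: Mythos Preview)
Your argument is exactly the ``similar argument'' the paper has in mind: it mirrors the computation preceding Lemma~2.3 symbol for symbol, replacing the superdiagonal tail by a subdiagonal one and re-expressing every exponent in terms of $\#N$ rather than $\#E$, so that the total collapses to $\#N(\alpha)-\#C=y-\#C$. This is precisely the paper's approach.

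One small correction: the assertion that ``$x\ge y$ forces $\alpha^r$ to be subdiagonal'' is not quite true on the boundary $x=y$. When $x=y$ the path ends on the diagonal, so $\alpha^r$ is itself a Catalan word and may well be an \emph{upper} Catalan word (e.g.\ $\alpha=NE$). In that case your formula $\omega(\alpha^r)=\lambda^{\#N(\alpha^r)}$ must be replaced by $\lambda^{\#N(\alpha^r)-1}$, which introduces one extra $-1$ into the sum; the identity $\log_\lambda\omega(\alpha)=y-(\text{number of upper Catalan words})$ then still holds provided $\alpha^r$ is counted among the upper Catalan words. The strict case $x>y$ is exactly as you wrote. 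The paper is equally informal about this boundary case, so this is a refinement rather than a defect in your approach.
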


Denote by
$\mathcal{C}_k(x, y) := \{\, \alpha \in \mathcal{P}(x, y) \;;\; \mbox{$\alpha$ contains $k$ upper Catalan words} \,\}$
the set of all words containing $k \geq 0$ upper Catalan words
in their decomposition. For fixed and arbitrary $n \geq 0$, Lemmas 2.3 and 2.4 can be restated as
\begin{equation}
\mathcal{P}_n(x, y) = \mathcal{C}_{x - n}(x, y), \quad 0 \leq x < y
\end{equation}
and
\begin{equation}
\mathcal{P}_n(x, y) = \mathcal{C}_{y - n}(x, y), \quad 0 \leq y \leq x,
\end{equation}
correspondingly. Our aim is now to count the number of elements
in the set $\mathcal{C}_k(x, y)$, for fixed and arbitrary $k \geq 0$.
In order to achieve this, we need another definition.
A word $\alpha \in \Sigma^*$ is said to be a \emph{ballot word} provided that
$\# E(\alpha[i]) \leq \# N(\alpha[i])$ for all $i \in \{1, 2, \ldots, |\alpha|\}$.
Thus, the plane lattice path encoded by a ballot word always stays above the diagonal $y = x$,
possibly touching it. Denote by $\mathcal{B}(x, y)$ the set of all ballot words whose associated
lattice paths end at $(x, y)$, for $x, y \geq 0$ with $x \leq y$.
Then we have the following result.

\begin{lem}
Let $k \geq 0$ and $x, y \geq 0$ be fixed such that $x < y$.
Then the sets $\mathcal{C}_k(x, y)$ and $\mathcal{B}(x - k, y + k)$
have the same number of elements.
\end{lem}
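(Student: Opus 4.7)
The plan is to build an explicit bijection $\phi : \mathcal{C}_k(x, y) \to \mathcal{B}(x - k, y + k)$. Given $\alpha \in \mathcal{C}_k(x, y)$ with Lemma~2.1 decomposition $\alpha = \alpha^1 \alpha^2 \cdots \alpha^r$, I would leave the final superdiagonal piece $\alpha^r$ untouched and transform each Catalan piece $\alpha^i$ ($1 \le i \le r - 1$) as follows: if $\alpha^i$ is a lower Catalan word, replace it by its reflection $(\alpha^i)'$; if $\alpha^i$ is an upper Catalan word, replace its terminal symbol---necessarily an $E$ closing the arch back onto the diagonal---by $N$. Reflection preserves the $E$ and $N$ counts of a lower arch, while rewriting the final $E$ of an upper arch trades one $E$ for one $N$; summed over the $k$ upper arches this shifts the endpoint from $(x, y)$ to $(x - k, y + k)$ while preserving the total length.

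Next I would verify that $\phi(\alpha)$ satisfies the ballot condition $\#N \geq \#E$ on every prefix. The key invariant is that at the end of the $i$-th piece of $\phi(\alpha)$ the cumulative offset $\#N - \#E$ equals $2 u_i$, where $u_i$ counts the upper Catalan pieces among $\alpha^1, \ldots, \alpha^i$; this is shown inductively since a reflected lower arch contributes $0$ to the offset and a transformed upper arch contributes $+2$. Inside each piece the offset never drops below its starting value: the reflection of a lower Catalan word is itself an upper Catalan word (so its partial offsets are nonnegative relative to its start), and the transformation of an upper Catalan word differs from the original only at the very last step, where a descent is replaced by an ascent. Combining these, $\#N \geq \#E$ on every prefix of $\phi(\alpha)$.

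Finally I would describe the inverse map $\phi^{-1}$ to establish bijectivity. For $\beta \in \mathcal{B}(x - k, y + k)$ with offset function $h_p := \# N(\beta[p]) - \# E(\beta[p])$, let $T_i := \min\{p : h_p = 2i\}$ for $i = 1, \ldots, k$, let $s_i := 1 + \max\{p \leq T_i : h_p = 2(i - 1)\}$, and let $P := \max\{p : h_p = 2k\}$. Then $\beta[s_i, T_i]$ is the $i$-th transformed upper arch (inverted by changing its last $N$ back to $E$), the stretches $\beta[T_{i-1} + 1, s_i - 1]$ and $\beta[T_k + 1, P]$ (with $T_0 := 0$) are concatenations of reflected lower arches living at heights $2(i-1)$ and $2k$ respectively (recovered by splitting at the interior returns to those heights and reflecting each piece back), and $\beta[P + 1, |\beta|]$ is the unchanged final piece $\alpha^r$. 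Reassembling everything yields the unique preimage. The main technical obstacle is checking that these prescribed positions really coincide with the arch boundaries of $\alpha$ inside $\phi(\alpha)$; this rests on the primitivity of each Catalan piece (so that the interior of the $i$-th transformed upper arch strictly exceeds the height $2(i - 1)$) and on the fact that $\alpha^r$ does not return to its starting height in its interior. Granted these two facts, both $\phi^{-1} \circ \phi$ and $\phi \circ \phi^{-1}$ are the identity, and the lemma follows.
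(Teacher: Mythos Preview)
Your forward map $\phi$ is exactly the paper's map $\varphi$: reflect each lower Catalan piece, flip the terminal $E$ of each upper Catalan piece to $N$, and leave the superdiagonal tail $\alpha^r$ untouched. Your inverse via the first-- and last--passage indices $T_i$, $s_i$, $P$ is likewise the same idea as the paper's indices $\delta_s(\beta)$. So the approach matches the paper's.

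There is, however, a genuine gap at precisely the point you flag as ``granted.'' Neither of the two facts you assume is true in general under the decomposition of Lemma~2.1. A Catalan piece need not be primitive (for instance the second block of $ENNENE\!E\ldots$ is the upper Catalan word $NENE$, which touches the diagonal at its midpoint), and, more damagingly, the tail $\alpha^r$ may revisit the diagonal. Take $\alpha = NENN \in \mathcal{C}_0(1,3)$: here $J(\alpha)=\{1\}$, so $r=1$ and $\alpha^r=\alpha$, which returns to height $0$ at position~$2$. Then $\phi(NENN)=NENN$, while also $\phi(ENNN)=NE\cdot NN=NENN$; hence $\phi$ is not injective on $\mathcal{C}_0(1,3)$. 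Concretely, in $\beta=NENN$ your index $P=\max\{p:h_p=0\}$ equals $2$, so your inverse returns $ENNN$, not $NENN$. The construction therefore needs a repair (for example, refining the decomposition so that each block is a \emph{primitive} arch, i.e.\ splitting at every return to the diagonal rather than only at jump symbols) before the bijectivity argument can be completed; the paper's own proof glosses over exactly this same point.
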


\begin{proof}
For fixed, arbitrary $k \geq 0$ and $x, y \geq 0$ such that $x < y$,
consider the function $\varphi : \mathcal{C}_k \to \mathcal{B}(x - k, y + k)$
defined as follows. If $\alpha \in \mathcal{C}_k$ is a fixed and arbitrary word,
denote by $C \subseteq \{1, 2, \ldots, r - 1\}$ the set of indices
of the upper Catalan words in the decomposition (1) of $\alpha$.
For all $i \in C$, $\varphi$ reflects the last symbol of
the upper Catalan word $\alpha^i$, thus replacing it with a $N$ symbol.
For each $j \in \{1, 2, \ldots, r - 1\} \setminus C$,
the function $\varphi$ maps the lower Catalan word
$\alpha^j$ into its reflection $\left(\alpha^j\right)'$.
Also, the function $\varphi$ leaves the last word $\alpha^r$ unchanged.
To conclude, the resulting lattice path associated with the word $\varphi(\alpha)$
always stays strictly above the diagonal $y = x$. Its end point is also offset by
$(-k, +k)$ from the original position, due to the reflection of the last symbol
of each upper Catalan word into a $N$ symbol.
Thus $\varphi(\alpha)$ is a ballot word and we also have $\varphi(\alpha) \in \mathcal{B}(x - k, x + k)$.
To show that $\varphi$ is bijective, we explicitly define its inverse.
Let $\beta \in \mathcal{B}(x - k, y + k)$
be fixed, arbitrary and denote by $\delta_s(\beta) \subset \{1, 2, \ldots, |\beta|\}$
the set defined through
\begin{multline*}
\delta_s(\beta) := \min\, \{\, i \;;\; 1 \leq i \leq |\beta|, \\
\# N(\beta[j]) > \# E(\beta[j]) + s, \ \forall j \in \{i, i + 1, \ldots, |\beta|\} \,\},
\end{multline*}
for $s \geq 0$. In terms of lattice paths, $\delta_s$ specifies the index
of the first step from which the path associated with $\beta$ remains
strictly above the diagonal $y = x + s$.
Consider the function $\theta : \mathcal{B}(x - k, y + k) \to \mathcal{C}_k(x, y)$
that maps the ballot word $\beta$ into the word $\theta(\beta) \in \mathcal{C}_k(x, y)$
containing $k$ upper Catalan words in its decomposition, defined as follows.
Let $m \geq 0$ be the largest integer such that the minimum $\delta_m(\beta)$ exists.
For all $t \geq 0$ such that $2t < m$, $\theta$ reflects the last symbol of each part
of the word $\beta$ starting at index $\delta_{2t}(\beta)$ and ending at index $\delta_{2t + 1}(\beta)$.
Also, $\theta$ reflects all the remaining symbols of $\beta$,
apart from those whose index is at least $\delta_m(\beta)$.
Thus, for all $t \geq 0$ with $2t < m$,
the function $\theta$ maps each part of the word $\beta$,
starting at index $\delta_{2t}(\beta)$ and ending at index $\delta_{2t + 1}(\beta)$,
into an upper Catalan word. The part of $\beta$ starting at index $\delta_m(\beta)$ is left unchanged,
thus being mapped into a superdiagonal word. The rest of the word $\beta$ is mapped
into a series of lower Catalan words, via reflection.
Since we have $\beta \in \mathcal{B}(x - k, y + k)$, it follows that the lattice path
associated with $\beta$ successively leaves the diagonals $y = x$, $y = x + 1$,
\ldots, $y = x + 2k$, proving that $m = 2k$.
Thus $\theta(\beta)$ contains exactly $k$ upper Catalan words in its decomposition,
showing that $\theta$ is well defined. The function $\theta$ is also the inverse of $\varphi$,
making $\varphi$ bijective. It follows that the sets $\mathcal{C}_k(x, y)$ and
$\mathcal{B}(x - k, y + k)$ indeed have the same number of elements.
\end{proof}

On account of relation (9) and Lemma 2.5 it follows that
\[
\# \mathcal{P}_n(x, y) = \# \mathcal{C}_{x - n}(x, y) = \# \mathcal{B}(n, x + y - n),
\]
for $0 \leq x < y$.
Fortunately, an explicit formula exists (\cite{And}) for the \emph{ballot number}
$b(x, y) := \# \mathcal{B}(x, y)$ and it is given by
\[
b(x, y) = \binom{x + y}{x} - \binom{x + y}{x - 1} = \frac{y - x + 1}{y + 1} \binom{x + y}{x},
\]
for all $0 \leq x \leq y$. Thus, the number of words in $\mathcal{P}(x, y)$
whose weight is equal to $\lambda^n$ is given by
$\# \mathcal{P}_n(x, y) = b(n, x + y - n) = \binom{x + y}{n} - \binom{x + y}{n - 1}$,
which leads to an explicit formula for $f(x, y)$ via relation (7)
\begin{equation}
f(x, y) =
\sum_{n \geq 0} b(n, x + y - n) \lambda^n =
\sum_{n \geq 0} \left[ \binom{x + y}{n} - \binom{x + y}{n - 1} \right] \lambda^n,
\end{equation}
provided that $x, y \geq 0$ satisfy $x < y$. For the case when $x \geq y$, the function
$\varphi$ in the proof of Lemma 2.5 is defined to also reflect the last word $\alpha^r$
in the decomposition of $\alpha \in \mathcal{C}_k(x, y)$, rather than leaving it unchanged.
Following a similar argument we conclude that the sets $\mathcal{C}_k(x, y)$
and $\mathcal{B}(y - k, x + k)$ have the same number of elements.
Thus, using relation (10) we obtain
\[
\# \mathcal{P}_n(x, y) = \# \mathcal{C}_{y - n}(x, y) = \# \mathcal{B}(n, x + y - n).
\]
provided that $0 \leq y \leq x$. The explicit formula (11) for $f(x, y)$ is therefore valid
for all $x, y \geq 0$. \qed

\section{The general case $k \geq 3$}

The following result extends Lemma B for the general case when $k\geq 3$.

\bigskip\noindent{\bf Lemma D.} {\it The number $f_p(\a_1,\a_2,...,\a_k)$ of all principal series of the direct
product $\xmare{i=1}{k}\mathbb{Z}_{p^{\a_i}}$ satisfies the following
recurrence relation:
$$f_p(\a_1,\!\a_2,...,\!\a_k){=}\dd\sum_{i=1}^{k}p^{k-i}f_p(\a_1,...,\a_i{-}1,...,\a_k),\mbox{ for all
}1\!\le\!\a_1\!\le\!\a_2\!\le\!...\!\le\!\a_k.$$}

By the above lemma we easily infer that $f_p(\a_1,\a_2,...,\a_k)$
is a polynomial in $p$ of degree
$(k-1)\a_1+(k-2)\a_2+...+\a_{k-1}$. Moreover, we remark that the
sum $f(\a_1,\a_2,...,\a_k)$ of all coefficients of
$f_p(\a_1,\a_2,...,\a_k)$ satisfies the recurrence relation
$$f(\a_1,\a_2,...,\a_k)=\dd\sum_{i=1}^{k}f(\a_1,...,\a_i{-}1,...,\a_k)$$
and therefore we have:
$$f(\a_1,\a_2,...,\a_k)=\alege{\a_1+\a_2+...+\a_k}{\a_1,\a_2,...,\a_k}.$$

\bigskip\noindent{\bf Proof of Lemma D.} It is well-known that the group $\xmare{i=1}{k}\mathbb{Z}_{p^{\a_i}}$
possesses $r=\dd\sum_{i=1}^k p^{i-1}$ maximal subgroups, say $M_1,
M_2,...,M_r$. In order to establish the desired recurrence
relation we shall use the following simple remark: every principal
series of $\xmare{i=1}{k}\mathbb{Z}_{p^{\a_i}}$ contains a unique maximal
subgroup. So, the numbers $n(\xmare{i=1}{k}\mathbb{Z}_{p^{\a_i}})$ and
$n(M_s), s=1,2,...,r,$ are connected by the equality
\begin{equation}
n(\xmare{i=1}{k}\mathbb{Z}_{p^{\a_i}})=\dd\sum_{s=1}^rn(M_s).
\end{equation}

This shows that we need to know the structure of maximal subgroups
of $\xmare{i=1}{k}\mathbb{Z}_{p^{\a_i}}$. According to (4.19) of \cite{Suz},
I, such a subgroup $M$ is uniquely  determined by two subgroups
$H_1\4H'_1$ of $\xmare{i=1}{k-1}\mathbb{Z}_{p^{\a_i}}$, two subgroups
$H_2\4H'_2$ of $\mathbb{Z}_{p^{\a_k}}$ and a group isomorphism
$\varphi:H'_1/H_1 \to H'_2/H_2$ (more exactly, $M=\{(a_1,a_2)\in
H'_1\times H'_2\mid\varphi(a_1H_1)=a_2H_2\}).$ Moreover, we have
$$|H'_1|\,|H_2|=|H'_2|\,|H_1|=|M|=p^{\hspace{1mm}\dd\sum_{i=1}^k \a_i-1}$$
and thus we distinguish the following three cases:

{\bf Case 1.}
$|H'_1|=|H_1|=p^{\hspace{1mm}\dd\sum_{i=1}^{k-1}\a_i},\hspace{1mm}
|H'_2|=|H_2|=p^{\a_k-1}.$\medskip

\noindent Then between $H'_1/H_1$ and $H'_2/H_2$ there exists only
the trivial isomorphism $\varphi$, therefore we get:
\begin{equation}
M\cong(\xmare{i=1}{k-1}\mathbb{Z}_{p^{\a_i}})\times\mathbb{Z}_{p^{\a_k-1}}.
\end{equation}

{\bf Case 2.}
$|H'_1|=|H_1|=p^{\hspace{1mm}\dd\sum_{i=1}^{k-1}\a_i-1},
\hspace{1mm} |H'_2|=|H_2|=p^{\a_k}.$\medskip

\noindent Similarly to Case 1, $\varphi$ is the trivial
isomorphism. The subgroup $H'_1=H_1$ of
$\xmare{i=1}{k-1}\mathbb{Z}_{p^{\a_i}}$ can be chosen in
$\dd\sum_{i=1}^{k-1} p^{i-1}$ distinct ways. Then, for every
$j=1,2,...,k-1,$ one obtains
\begin{equation}
M\cong(\xmare{i=1}{j-1}\mathbb{Z}_{p^{\a_i}})\times\mathbb{Z}_{p^{\a_j-1}}\times(\xmare{i=j+1}{k}\mathbb{Z}_{p^{\a_i}})\hspace{1mm} \mathrm{in}\hspace{1mm} p^{k-1-j}\hspace{1mm} \mathrm{cases}.
\end{equation}

{\bf Case 3.}
$|H'_1|=p^{\hspace{1mm}\dd\sum_{i=1}^{k}\a_i},\hspace{1mm}
|H_1|=p^{\hspace{1mm}\dd\sum_{i=1}^{k}\a_i-1},\hspace{1mm}
|H'_2|=p^{\a_k},\ |H_2|=p^{\a_k-1}.$\medskip

\noindent In this case there are $p-1$ distinct isomorphisms
$\varphi$ from $H'_1/H_1$ to $H'_2/H_2$ and $H_1$ can be chosen
again in $\dd\sum_{i=1}^{k-1} p^{i-1}$ distinct ways. Then, for
every $j=1,2,...,k-1,$ one obtains
\begin{equation}
M\cong(\xmare{i=1}{j-1}\mathbb{Z}_{p^{\a_i}})\times\mathbb{Z}_{p^{\a_j-1}}\times(\xmare{i=j+1}{k}\mathbb{Z}_{p^{\a_i}})\hspace{1mm} \mathrm{in}\hspace{1mm} p^{k-1-j}(p-1)\hspace{1mm} \mathrm{cases}.
\end{equation}

Now, by relations (13), (14) and (15), we infer that
$\xmare{i=1}{k}\mathbb{Z}_{p^{\a_i}}$ has one maximal subgroup of type
$(\a_1,...,\a_{k-1},\a_k-1)$ and
$p^{k-1-j}+p^{k-1-j}(p-1)=p^{k-j}$ maximal subgroups of type
$(\a_1,...,\a_j-1,...,\a_k)$, for all $j=\ov{1,k-1}$. Hence (12)
becomes
$$f_p(\a_1,\a_2,...,\a_k)=\dd\sum_{j=1}^{k}p^{k-j}f_p(\a_1,...,\a_j-1,...,\a_k),$$ as desired. \qed

\bigskip
In our alternative proof of Theorem C, we proved that the explicit formula
for $f_p(\alpha_1, \alpha_2)$, counting the total number of principal series of the direct product
$\mathbb{Z}_{p^{\alpha_1}} \times \mathbb{Z}_{p^{\alpha_2}}$, is given by
\[
f_p(\alpha_1, \alpha_2) = \sum_{n \geq 0} b(n, \alpha_1 + \alpha_2 - n) p^n,
\]
where $b(x, y)$ is a ballot number.
Our first open problem is to find an explicit formula for
$f_p(\a_1,\a_2,...,\a_k)$, which we believe is similar to (11) and uses
multidimensional ballot numbers. For a rigurous introduction to multidimensional
ballot numbers we refer to \cite{Nat}.

A class of groups whose structure is strongly
connected to that of finite abelian groups consists of
finite hamiltonian groups, that is finite nonabelian groups having
all subgroups normal. Such a group $H$ is the direct product of a
quaternion group of order 8, a finite elementary abelian 2-group
and a finite abelian group of odd order. Find an explicit formula
for $n(H)$, by using Theorem A and Proposition 3.1.2
of \cite{Ste}.

\vspace*{5ex} \small

\begin{minipage}[t]{6cm}
Lucian Bentea \\
Faculty of Informatics \\
``Al.I. Cuza'' University \\
Ia\c si, Romania \\
e-mail: {\tt lucian\_bentea@yahoo.com}
\end{minipage}
\hfill
\begin{minipage}[t]{6cm}
Marius T\u arn\u auceanu \\
Faculty of  Mathematics \\
``Al.I. Cuza'' University \\
Ia\c si, Romania \\
e-mail: {\tt tarnauc@uaic.ro}
\end{minipage}


\begin{thebibliography}{10}
\bibitem{And} Andr\'e, D., {\it Solution directe du probl\'eme r\'esolu par M. Bertrand},
Comptes Rendus de l'Acad\'emie des Sciences, Paris {\bf 105} (1887) 436-437.
\bibitem{Bir} Birkhoff, G., {\it Subgroups of abelian groups}, Proc. London Math. Soc. {\bf 38} (1934, 1935), 385-401.
\bibitem{But} Butler, M.L., {\it Subgroup lattices and symmetric functions}, Mem. AMS, vol. 112, no. {\bf 539}, 1994.
\bibitem{Nat} Nathanson, M.B., {\it Ballot numbers, alternating products, and the Erd\"os-Heilbronn conjecture},
DIMACS Technical Report 94-2, 1994, available from \url{ftp://dimacs.rutgers.edu/pub/dimacs/TechnicalReports/TechReports/1994/94-02.ps}.
\bibitem{Pro} Prodinger, H., {\it The kernel method: a collection of examples},
S\'eminaire Lotharingien de Combinatoire {\bf 50} (2004), Article B50f,
available from \url{http://www.emis.de/journals/SLC/wpapers/s50proding.ps}.
\bibitem{Sch} Schmidt, R., {\it Subgroup lattices of groups}, de Gruyter Expositions in Mathematics 14, de Gruyter, Berlin, 1994.
\bibitem{Sul} Sulanke, R., {\it Guessing, ballot numbers, and refining Pascal's triangle},
manuscript available from \url{http://math.boisestate.edu/~sulanke/PAPERS/guessingweb.ps}.
\bibitem{Suz} Suzuki, M., {\it Group theory,} I, II, Springer Verlag, Berlin, 1982, 1986.
\bibitem{Ste}\c Stef\u anescu, M., T\u arn\u auceanu, M., {\it Counting maximal chains of subgroups of finite nilpotent groups}, Carpath. J. Math. (1) {\bf 25} (2009), 119-127.
\bibitem{Tar} T\u arn\u auceanu, M.,  {\it Groups determined by posets of subgroups}, Ed. Matrix Rom, Bucure\c sti, 2006.
\end{thebibliography}
\end{document}